 \documentclass[]{article}

\usepackage{amsmath,amsthm, amssymb}
\usepackage{graphicx}
\usepackage{color}

\setlength{\parindent}{0pt}

\newtheorem{thm}{Theorem}
\newtheorem{lem}{Lemma}[section]
\newtheorem{cor}[lem]{Corollary}

\theoremstyle{definition}
\newtheorem{defn}[lem]{Definition}
\newtheorem{exe}[lem]{Example}
\newtheorem{algo}{Algorithm}

\theoremstyle{remark}
\newtheorem{rem}[lem]{Remark}
\newtheorem*{rem*}{Remark}



\newcommand{\ttlt}{\textrm{lt}}

\newcommand{\ttdeg}{\textrm{deg}}

\newcommand{\tndeg}{\textnormal{deg}}

\newcommand{\LX}{\mathrm{L}_\prec^{\scriptscriptstyle\!\![X]}}

\newcommand{\I}{\mathfrak{I}}
\newcommand{\F}{\Omega_{u}}

\newcommand{\redGB}{\underline{GB}}


\newcommand{\N}{\mathbb{N}}
\newcommand{\R}{\mathbb{R}}

\newcommand{\K}{\mathbb{K}}

\newcommand{\zero}{\mathbf{0}}
\newcommand{\wrt}{w.r.t.}

\begin{document}


\title{Ideal-specific elimination orders form a star-shaped region}

\author{Hartwig Bosse\footnote{J.W. Goethe Universit\"at, Frankfurt, bosse@math.uni-frankfurt.de }\ ,%
  \setcounter{footnote}{6}%
\ Christine G\"artner\footnote{Freie Universit\"at Berlin, christine.gaertner@math.fu-berlin.de},\hspace*{8pt} and%
\ Oleg Golubitsky\footnote{Google, Inc., Waterloo, oleg.golubitsky@gmail.com}
}

\maketitle

\begin{abstract}
This paper shows that for any given polynomial ideal $\I\subset\K[x_1,\ldots,x_n]$ the collection of Gr\"obner cones corresponding 
to $\I$-specific elimination orders form a star-shaped region which contrary to first intuition in general is not convex.
\smallskip

Moreover we show that the corresponding region may contain Gr\"obner cones intersecting in the boundary of the Gr\"obner fan in the origin only. This implies that Gr\"obner walks aiming for the elimination of variables from a polynomial ideal can be terminated earlier than previously known. We provide a slightly improved stopping criterion for a known Gr\"obner walk algorithm for the elemination of variables. 
\end{abstract}



\section{Introduction}

Elimination in systems of polynomial equations is a classical topic important in optimization and modeling. 
Given an ideal $\I$ of polynomials in $\K[X][U]:=\K[x_1,\ldots,x_n,u_1,\ldots,u_m]$ over some field $\K$, the task of eliminating the variables $u_i$ can be solved by finding an ideal basis for the the so called \emph{elimination ideal} $\I\cap \K[X]$, where  $\K[X]=\K[x_1,\ldots,x_n]$.  
This can be achieved using resultants 
(see \cite{book::syl}, \cite{book::sal}, or \cite{article::sed}) 
or by calculating a  Gr\"obner basis (GB) for $\I$ with respect to some special monomial order (see \cite{proceedings::Buchberger}, \cite{proceedings::Kalkbrenner}), as for example, the pure lexicographic or block term orders.
Concerning these approaches, the
method using Gr\"obner bases has some important advantages, namely, the method is reliable and can
algorithmically solve the problem in full generality. 
\smallskip

In the Gr\"obner basis approach one calculates a  Gr\"obner basis $G_{\prec_\mathrm{elim}}$ with respect to a suitable monomial order $ \prec_\mathrm{elim}$, 
such that those polynomials in $G_{\prec_\mathrm{elim}}\cap\K[X] $ form a Gr\"obner basis for  $\I\cap \K[X]$. 
Calculating these very specific Gr\"obner bases directly can in practice be rather difficult.
One way to overcome this, is to calculate such a special GB 
by performing a Gr\"obner walk, a method introduced by Collart, Kalkbrener, and Mall in \cite{article::Collart97convertingbases}.

The actual walk consists of a series of elementary GB-conversions which are easy to compute.
Starting with some easily computable GB of $\mathcal I$ with respect to some order $\prec_\mathrm{start}$, 
step-by-step, intermediate GBs for orders in between $\prec_\mathrm{start}$ and $\prec_\mathrm{elim}$ are calculated. Each basis-conversion from one intermediate GB into the next is (in general) relatively cheap computationwise, 
keeping the overall amount of necessary calculations relatively low (see \cite{article::AGK3}).

\medskip

To handle the intermediate orders in any Gr\"obner walk algebraically, 
one represents them by weight vectors and introduces the concept of a Gr\"obner fan:

For a fixed ideal $\I\subset \mathbb{K}[X][U]$, any proper monomial order for monomials in $\mathbb{K}[X][U]$ can be represented by some weight vector in $\omega\in\R_{\ge 0}^{n+m}$.  The Gr\"obner fan, introduced by Mora and Robbiano in \cite{article::mora_robbiano}, is a polyhedral complex, which subdivides the weight vectors in $\R_{\ge 0}^{n+m}$. Each cell of the Gr\"obner fan is an equivalence class of such weight vectors:

Two weight vectors are equivalent, if the monomial order they represent yields the same Gr\"obner basis for $\I$.
The closure of such an equivalence class is a \emph{Gr\"obner cone} and the collection of these cones forms the Gr\"obner fan. Note that Gr\"obner cones are convex polyhedral cones (see  \cite{article::mora_robbiano}).
\medskip

Concerning Gr\"obner walks used in elmination of variables,
Tran proposes in  \cite{article::Tran1} to have the target monomial order $\prec_{\mathrm{elim}}$ dependent on $\I$, combining the Gr\"obner walk technique with a sudden-death-algorithm.

So instead of using \emph{the same} elimination
term order for  \emph{all} ideals, Tran proposes to use \emph{an ideal-specific} monomial order suitable (only) for elimination in the specifically given ideal. He characterizes these special ideal-specific orders via the corresponding reduced Gr\"obner basis.

In addition to being faster on some examined test bed cases, his approach gets rid of several algebraic technicalities usually involved in Gr\"obner walks, e.g. his approach simplifies the necessary perturbation of the weight vector representing the elimination order:
\smallskip

Gr\"obner walk algorithms are particularly fast, if the given path of the walk is \emph{generic}.
To achieve this, one has to perturb the target weight vector of the walk in a suitable manner (see e.g. \cite{article::Fukuda_Jensen_Thomas}).
In \cite{article::Tran1}, Tran observed that using ideal-specific elimination orders, it suffices to end a Gr\"obner walk in a Gr\"obner cone adjacent to some \emph{elimination vector} (see below) which eases the requirements on the necessary perturbations.
\smallskip

We refine Tran's findings by giving a  more precise classification of those Gr\"obner cones, 
which correspond to ideal-specific elimination orders.

\medskip

\subsection{Main result}

The main results of this paper are the following:

For a given ideal $\I\subseteq \K[x_1,\ldots,x_n,u_1,\ldots, u_m]$, 
the union of all Gr\"obner cones belonging to $\I$-specific orders for the elimination of $u_1,\ldots,u_m$ from $\I$
form a star-shaped region with center $\F:=\{\omega\in\R^{n+m}_{\ge0}\;:\;\omega_1=0,\ldots,\omega_n=0\}$.
This means that if one wishes to eliminate the variables $u_i$ from $\I$, i.e., 
one wants to calculate some Gr\"obner basis for $\I\bigcap \K[X]$, the orders $\prec$ that \emph{do yield} such a Gr\"obner basis have Gr\"obner cones, whose union is a star-shaped region with center $\F$.

Moreover we show that (for some ideals $\I$) some of the Gr\"obner cones which belong to $\I$-specific elimination orders intersect the boundary of the Gr\"obner fan in the point zero \emph{only}, 
meaning that for such cones all points but the vertex lie in the relative interior of the Gr\"obner fan.
\smallskip

Both results are very useful when trying to eliminate variables using the Gr\"obner walk-approach:
First of all, we can improve the stopping criterion for such a Gr\"obner walk relative to the known result of Tran \cite{article::Tran}.
Moreover, knowing the geometric shape of the target-region can help improve the step-decision process in a Gr\"obner walk towards an elimination-basis.

Finally, in the general case, just as shown by Tran, using our algorithm, one can get rid of technicalities involved in the implementation of the Gr\"obner walk such as the perturbation of the target vector (see \cite{article::Tran}).

\section{Notation}
In the following we introduce some general notation for polynomials and monomial orders.
To avoid clashes with our distinct variables $x_i$ and $u_j$, here we name all variables 
$y_i$, assuming $(y_1,\ldots,y_{n+m})=(x_1,\ldots,x_n,u_1,\ldots, u_m)$.
So in the following we consider polynomials $f=\sum_{\alpha}f_{\alpha}y^{\alpha}$ 
where $y^\alpha:=\prod_{i=1}^{n+m}y_i^{\alpha_i}$ is a monomial with exponent $\alpha\in\N^{n+m}$ and 
the coefficients $f_{\alpha}$ are from some field $\K$.

\subsection{Monomial orders }
In the following let $\prec$ be some monomial order and $f,g\in \K[Y]$.
We denote the \emph{leading term} of $f$ \wrt\ $\prec$ by  $\ttlt_{\prec}(f)$.
Let $\I\subset\K[Y]$ be some polynomial ideal, then the \textit{initial ideal} of $\I$ \wrt\ $\prec$ is the ideal
$\left\langle \ttlt_{\prec}(\I)\right\rangle$ 
which is generated by the set of \emph{leading terms} of $\I$, i.e., $\ttlt_{\prec}(\I):=\left\{ \ttlt_{\prec}(f)\;:\; f\in \I\right\}$. 
\smallskip

\subsection{Reduced Gr\"obner bases}

In this work we consider reduced Gr\"obner bases:
Let $\I\subseteq \K[Y]$ be some monomial ideal and let $\prec$ be some monomial order.
A Gr\"obner basis $G$ for $\I$ \wrt\ $\prec$ is called \emph{reduced} if for every pair $g,h\in G$, $g\ne h$  one has that $\ttlt_{\prec}(g)$ does not divide any monomial of $h$ (so $h$ can not be reduced by $g$ any further).
Moreover $G$ is called \emph{normed} if for all $g\in G$ the leading coefficient is $1$. 

Every ideal $\I\subseteq \K[Y]$ has a unique finite normed \underline{reduced} Gr\"obner basis with respect to $\prec$ (see \cite{book::cox_little_oshea}, \cite{phd::buchberger}), which we denote by $\redGB(\I,\prec)$. 
\smallskip


\subsection{Weight vectors}

To algebraically work with monomial orders, it is helpful to represent them by weight vectors:
The the set of all \textit{weight vectors} $\Omega:=\R^{n+m}_{\geq 0}$ is the non-negative orthant. 
Let $f\in \K[Y]$ and $\omega\in\Omega$, then  $\ttdeg_{\omega}(f):= \max\{\omega^T\alpha\;:\; \;f_{\alpha}\neq 0\}$
is  the \textit{degree} of $f$ \wrt\  $\omega$.
The \textit{initial form} or \textit{leading terms}  of $f$ w.r.t. $\omega\in\Omega$ is defined as
$$\ttlt_{\omega}(f):= \sum_{\alpha\in A}{f_{\alpha} y^{\alpha}} 
\quad \text{where}
\quad A:=\left\{\alpha\in\N^n \;:\; f_{\alpha}\neq 0,\, \omega^T\alpha=deg_{\omega}(f)\right\}.
$$
The \textit{initial ideal} of $\I$ w.r.t. $\omega$ is the set
$\left\langle \ttlt_{\omega}(\I)\right\rangle:=\left\langle\;\left\{ \ttlt_{\omega}(f)\;:\; f\in \I\right\}\;\right\rangle$.

\begin{defn} Let $\I\subseteq\K[Y]$ be some fixed ideal, let $\prec$ be some monomial order and $\omega\in\Omega$. 
We say that 
\begin{itemize}
\item $\omega$ \emph{represents} $\prec$ if 
$\left<\ttlt_{\omega}(\I)\right>=\left<\ttlt_\prec(\I)\right>$ holds.
\item $\prec$ \emph{refines} $\omega$, 
if for all pairs of monomials $m_1,m_2\in[Y]$ one has that
$\ttdeg_\omega(m_1)<\ttdeg_\omega(m_2)$ implies $m_1\prec m_2$.                                                                                                                               \end{itemize}
\end{defn}
\smallskip

Not all weight vectors $\omega$ induce a proper monomial order.
But using some monomial order as an additional  tie-breaker does yield an order:
\begin{defn}
Given an ideal $\I\subseteq\K[Y]$, a monomial order $\prec$, and some weight vector $\omega$ 
the monomial order $(\omega|\prec)$ is defined as follows: 

Let $\prec':=(\omega|\prec)$, then 
$$
m_1\prec' m_2 
\quad :\Leftrightarrow \quad 
\left\{\begin{array}{lll}
 &\deg_\omega(m_1)<\deg_\omega(m_2)&\\
\text{or}&
\deg_\omega(m_1)=\deg_\omega(m_2)&\text{and}\;\; m_1\prec m_2
\end{array}\right.
$$
\end{defn}
So $(\omega|\prec)$ corresponds to first (partially) ordering the monomials by $\ttdeg_\omega$ and using $\prec$ as a tie-breaker. Clearly, the order $(\omega|\prec)$ refines $\omega$.

\subsubsection{The Gr\"obner fan}

\begin{defn}\label{defn:groebner_cone}
Given an ideal $\I\subseteq \K[Y]$ and a monomial order $\prec$, we define the Gr\"obner cone of $\I$ \wrt\ $\prec$ by
$$ C_\prec(\I) := \mathrm{closure}\left(\;\{\omega\in\Omega\;:\; \left<\ttlt_\omega(\I)\right> = \left<\ttlt_\prec(\I)\right>\}\;\right)$$
where $\mathrm{closure}$ denotes the closure with respect to the standard topology in $\R^{n+m}$.  
\end{defn}
For complete information on Gr\"obner cones, we would like to refer to \cite{article::mora_robbiano}, 
here we repeat some facts of these cones, relevant to this paper:

Each Gr\"obner cone of $\I$ is a convex polyhedral cone with non-empty interior (see \cite{article::mora_robbiano}) and
the set of all Gr\"obner cones forms a polyhedral complex, namely the Gr\"obner fan $\mathcal{C}(\I):=\{C_\prec(\I)\;:\;\prec \text{is some monomial order}\}$.
\smallskip

Moreover, each Gr\"obner cone corresponds to some reduced Gr\"obner basis, i.e.,
 all monomial orders, which are represented by the weight vectors within the same Gr\"obner cone, will have the same reduced Gr\"obner basis.
This implies that $\I$ has only finitely many different Gr\"obner cones. 
Moreover, we obtain the following for a weight vector and a monomial order constructed from it:
\begin{lem}\label{lem:represent}
For a  weight vector $\omega\in\Omega$  and some order $\prec$ let $\prec_\omega:=(\omega|\prec)$.
With this one has $\omega\in C_{\prec_\omega}(\I)$.
\smallskip

Reversely, if $\omega  \in C_{\prec}(\I)$ holds, then 
$\ttlt_{\prec_\omega}(g)=\ttlt_{\prec}(g)$ holds for all $g\in\redGB(\I,\prec)$, which consequently implies
$\redGB(\I,\prec_\omega)=\redGB(\I,\prec)$.
\end{lem}

For a proof we refer to Lemma 2.15 and Corollary 2.11 in \cite{article::Fukuda_Jensen_Thomas_2}. 

\subsection{Geometry}
In the following we prove that some special set of weight vectors is star-shaped, to this end we recall the following:

\begin{defn}
 A set $S\subseteq\R^{n+m}$ is called \textit{star-shaped} with center $C\subseteq S$, 
if for any two points $s\in S$ and $c\in C$ the segment $\overline{ms}$ is contained in $S$.
\end{defn}

\subsection{Universal elimination orders}
In the following we assume $\I$ to be some ideal in $\K[X][U]$.
A class of monomial orders, 
which provides a reduced Gr\"obner basis for the elimination ideal $\I\cap\K[X]$ is the set of elimination orders; 
these orders are traditionally used to calculate the elimination ideal via Gr\"obner bases.

\begin{defn} \label{def::EO} 
A monomial order $\prec$ on $\K[X][U]$ is called \emph{\textbf{universal} elimination order} for $U$, if 
    $$\ttlt_{\prec}(f)\in \K[X] \quad\Rightarrow\quad f\in \K[X] \qquad\forall f\in \K[X][U].$$
\end{defn}

So a universal elimination order for $U$ will have to prefer \emph{any} $u$-variable over some $x$-variable. For example, an appropriate lexicographic order is a universal elimination order.
A universal elimination order can be used to calculate the GB of the elimination ideal for \emph{any} given ideal:
\begin{lem}
If $\prec$ is a universal elimination order, then for \emph{every} ideal $\I$, 
the set $\redGB(\I,\prec)\cap\K[X]$ is the reduced Gr\"obner basis of the elimination ideal $\I\cap \K[X]$ w.r.t. $\prec$.
\end{lem}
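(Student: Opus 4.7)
The plan is to argue by the standard Gr\"obner-basis elimination template, but substituting the universal property of Definition~\ref{def::EO} for the usual order-specific elimination lemma. Write $G:=\redGB(\I,\prec)$ and $G_X:=G\cap\K[X]$; I want to show three things about $G_X$ viewed as a subset of $\K[X]$ equipped with the restricted order $\prec|_{\K[X]}$: (i)~it lies in $\I\cap\K[X]$, (ii)~it is a Gr\"obner basis of $\I\cap\K[X]$, and (iii)~it is reduced and normed.

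Step (i) is immediate: $G\subseteq\I$, so $G_X\subseteq\I\cap\K[X]$ by definition of the intersection. The substantive step is (ii). Fix any $f\in\I\cap\K[X]$. Because $G$ is a Gr\"obner basis of $\I$ with respect to $\prec$, there exists $g\in G$ with $\ttlm_\prec(g)\mid\ttlm_\prec(f)$. Since $f\in\K[X]$ the monomial $\ttlm_\prec(f)$ involves only the $x_i$, hence the divisor $\ttlm_\prec(g)$ likewise lies in $\K[X]$, and so $\ttlt_\prec(g)\in\K[X]$. This is precisely the hypothesis of Definition~\ref{def::EO}, which then forces $g\in\K[X]$, i.e.\ $g\in G_X$. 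Thus every leading monomial of an element of $\I\cap\K[X]$ is divisible by a leading monomial of an element of $G_X$, which is the defining property of a Gr\"obner basis.

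For (iii), the reducedness and normedness of $G_X$ are inherited directly from $G$: since $G$ is reduced, for distinct $g,h\in G$ no monomial of $h$ is divisible by $\ttlm_\prec(g)$; restricting to the sub-collection $G_X\subseteq G$ preserves this property verbatim. Similarly, every $g\in G_X$ has leading coefficient $1$ because this already held in $G$. Finally, uniqueness of the reduced normed Gr\"obner basis of $\I\cap\K[X]$ (recalled in the notation section) identifies $G_X$ with $\redGB(\I\cap\K[X],\prec|_{\K[X]})$.

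The only nontrivial step is the implication ``$\ttlt_\prec(g)\in\K[X]\Rightarrow g\in\K[X]$'' in the proof of (ii); for a fixed order this is the classical elimination lemma and usually requires a case analysis on the order's weight data, but here it is built into the definition of \emph{universal} elimination order, which is exactly why the conclusion holds for \emph{every} ideal $\I$ simultaneously. No further obstacle is expected.
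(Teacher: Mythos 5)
Your proof is correct and is exactly the standard elimination-theorem argument that the paper itself omits (it simply cites Tran for it): divisibility of $\ttlm_\prec(f)$ by some $\ttlm_\prec(g)$ forces $\ttlt_\prec(g)\in\K[X]$, the universal-elimination property then gives $g\in G\cap\K[X]$, and reducedness plus uniqueness of the reduced normed Gr\"obner basis finish the identification. No gaps.
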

\noindent For a proof see \cite{article::Tran1}.

\medskip

\subsection{Ideal-specific elimination orders}
In contrast to universal elimination orders, in this paper we examine ideal-specific elimination orders,
which serve to eliminate variables only for the specifically given ideal:

\begin{defn} \label{def:I_EO} (Ideal-specific elimination orders and vectors)\\
Let $\I\subseteq \K[X][U]$ be an ideal and $\prec$ a monomial order with 
$$
\ttlt_{\prec}(g)\in \K[X] \quad\Rightarrow\quad g\in \K[X] \qquad\forall g\in \redGB(\I,\prec).
$$
\begin{enumerate}
\item Then $\prec$ is called \textit{$\I$-specific elimination order for the elimination of $U$}.
When clear which variables are to be eliminated we abbreviate this to \textit{$\I$-specific elimination order}, or just  $\I$-EO.
\item Any $\omega \in C_{\prec}(\I)$ is called \textit{$\I$-specific for the elimination of $U$} ($\I$-{EV}).
\end{enumerate}
\end{defn}
In the following we will always consider the elemination of the $u$-variables for ideals in $\K[X][U]$,
so all ideal-specific elimination orders and ideal-specific elimination vectors will be ideal-specific for the elimination of $U$. 
\medskip

The reduced Gr\"obner basis for an $\I$-EO yields a Gr\"obner basis for the elimination ideal:
\begin{lem}\label{lem:red_GB_of_I_EO}
 Let $\I\subset\K[X][U]$ be some fixed ideal. If $\prec$ is an $\I$-specific elimination order for the elimination of $U$, then the set $\redGB(\I,\prec)\cap\K[X]$ is the reduced Gr\"obner basis of the elimination ideal $\I\cap \K[X]$ \wrt $\prec$.
\end{lem}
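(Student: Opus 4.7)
The plan is to verify the three defining properties in turn: containment in $\I\cap\K[X]$, the Gr\"obner basis property for $\I\cap\K[X]$, and reducedness/normedness. Write $G := \redGB(\I,\prec)$ and $G' := G \cap \K[X]$.

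First, containment is immediate: every $g\in G'$ lies in $G\subseteq\I$ and by construction in $\K[X]$, so $G'\subseteq \I\cap\K[X]$.

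The main step is showing that $G'$ is a Gr\"obner basis of $\I\cap\K[X]$ with respect to $\prec$. The idea is to take any nonzero $f\in\I\cap\K[X]$ and produce some $g\in G'$ whose leading term divides $\ttlt_\prec(f)$. Since $f\in\I$ and $G$ is a Gr\"obner basis of $\I$, some $g\in G$ satisfies $\ttlt_\prec(g)\mid \ttlt_\prec(f)$. Because $f\in\K[X]$, the monomial $\ttlt_\prec(f)$ involves only the $x_i$, hence so does its divisor $\ttlt_\prec(g)$, i.e.\ $\ttlt_\prec(g)\in\K[X]$. This is where the hypothesis that $\prec$ is an $\I$-specific elimination order comes in: by Definition~\ref{def:I_EO}, $\ttlt_\prec(g)\in\K[X]$ forces $g\in\K[X]$, and therefore $g\in G'$. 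Thus $\langle \ttlt_\prec(G')\rangle = \langle\ttlt_\prec(\I\cap\K[X])\rangle$, which is the defining property of a Gr\"obner basis.

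Finally, reducedness and being normed are inherited from $G$: any two distinct elements $g,h\in G'\subseteq G$ already satisfy that $\ttlm_\prec(g)$ divides no monomial of $h$, and each leading coefficient is $1$. Hence $G'$ is the normed reduced Gr\"obner basis of $\I\cap\K[X]$ with respect to the restriction of $\prec$ to $\K[X]$. The only nontrivial point is the divisibility-preserves-$\K[X]$ argument together with the $\I$-EO definition; everything else is bookkeeping.
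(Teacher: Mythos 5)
Your proof is correct, and it is exactly the standard argument (the one the paper defers to Tran for): the divisibility step $\ttlt_\prec(g)\mid\ttlt_\prec(f)\in\K[X]$ forces $\ttlt_\prec(g)\in\K[X]$, the $\I$-EO hypothesis then pulls $g$ itself into $\K[X]$, and reducedness is inherited by the subset. No gaps; nothing further is needed.
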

\noindent For a proof see \cite{article::Tran}.
\smallskip

So any $\I$-EO yields a Gr\"obner basis suitable for the elemination of the variables $u_i$ from $\I$. 
But in contrast to \emph{universal} elimination orders, an $\I$-EO will in general not work for other polynomial ideals. However, any \emph{universal} elimination order is -by definition- also an $\I$-EO for any ideal $\I$.
\smallskip

For our proofs we use the following characterization for an $\I$-EO:
\begin{lem}
Let $\I\subset\K[X][U]$ be some fixed ideal. A monomial order is $\I$-EO for $U$ if and only if
\begin{equation}\label{eq:characterization}
 \ttlt_{\prec}\left(\I\cap \K[X]\right)=\ttlt_{\prec}(\I)\cap \K[X].
\end{equation}
\end{lem}
The implication ``$\subseteq$'' in  (\ref{eq:characterization}) can be directly seen,  for a complete proof of the converse we refer to \cite{article::Tran}.
\medskip

By Definition \ref{def:I_EO}, if $\prec$ is an $\I$-{EO}, then any weight vector in  the Gr\"obner cone $C_\prec(\I)$ is $\I$-EV.  
Now assume (conversely) that one finds some $\I$-EV $\omega$ in $C_\prec(\I)$ with $\omega\ne 0$.
In some of these cases, one can conclude that $\prec$ is an $\I$-EO, namely 
if $\omega$ is in the \emph{interior} of $C_\prec(\I)$  (see  \cite{article::Tran}) or if 
$\omega$ lies on a \emph{special} part of the boundary of $\Omega$:

%
%
\begin{lem}\label{lem:I_EO_on_boundary}
Let $\I\subseteq \K[X][U]$ be some ideal and $\prec$ some monomial order.
If $(\zero,\widetilde\omega_u)\in C_{\prec}(\I)$ holds for some $\widetilde\omega_u\in\R^m_{>0}$, then $\prec$ is an $\I$-EO.
\end{lem}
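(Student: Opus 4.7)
The plan is to exploit the special structure of $\omega=(0,\widetilde\omega)$: it vanishes on every $x$-coordinate while being strictly positive on every $u$-coordinate. From $\omega\in C_\prec(\I)$ and Corollary \ref{cor:represent} one obtains that $\prec$ refines $\omega$, so that $\omega$-degree inequalities translate directly into $\prec$-comparisons on monomials.

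First I would pick an arbitrary $g\in\redGB(\I,\prec)$ with $\ttlt_\prec(g)\in\K[X]$. Because $\omega$ is zero on the $x$-coordinates and the leading monomial of $g$ contains no $u$-variable, we immediately have $\ttdeg_\omega(\ttlm_\prec(g))=0$. For any other monomial $m$ of $g$, the refinement property forces $\ttdeg_\omega(m)\le\ttdeg_\omega(\ttlm_\prec(g))$; otherwise the defining implication of refinement would yield $\ttlm_\prec(g)\prec m$, contradicting that $\ttlm_\prec(g)$ is the leading monomial of $g$. Combined with the trivial inequality $\ttdeg_\omega(m)\ge 0$, we conclude $\ttdeg_\omega(m)=0$.

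The strict positivity of $\widetilde\omega$ then closes the argument: writing $m=x^\gamma u^\delta$, the equality $\widetilde\omega^T\delta=0$ together with $\widetilde\omega_j>0$ for every $j$ forces $\delta=0$, so $m\in\K[X]$. Since this holds for every monomial of $g$, we obtain $g\in\K[X]$, which is precisely the defining property of an $\I$-specific elimination order in Definition \ref{def:I_EO}. The only mildly subtle step is the contrapositive passage from the refinement of $\omega$ by $\prec$ to the leading-monomial dominance in $\omega$-degree; once that is in hand the remaining argument is a direct unfolding of definitions, and I do not foresee genuine obstacles.
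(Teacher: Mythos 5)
Your proof is correct and is essentially the standard argument (the paper itself only cites Tran for this lemma, and the cited proof proceeds the same way): the $\omega$-maximality of $\ttlt_{\prec}(g)$ among the terms of each $g$ in the reduced Gr\"obner basis, combined with $\omega=(0,\widetilde\omega)$ and $\widetilde\omega>0$, forces every term of $g$ into $\K[X]$ whenever the leading term lies there. One small caution: you invoke Corollary~\ref{cor:represent} to obtain the refinement property for \emph{all} pairs of monomials, which as literally stated is stronger than what $\omega\in C_{\prec}(\I)$ guarantees (for a principal ideal, say, the cone can be the whole orthant while $\prec$ certainly does not refine every $\omega$); but since you only apply it to pairs of monomials occurring in the same element $g$ of $\redGB(\I,\prec)$, where the inequality $\ttdeg_{\omega}(m)\le\ttdeg_{\omega}(\ttlm_{\prec}(g))$ is exactly the standard halfspace description of the Gr\"obner cone, the argument stands.
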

Lemma \ref{lem:I_EO_on_boundary} and its proof can be found in \cite{article::Tran1},
it is used to obtain the main result in \cite{article::Tran}.
Geometrically, this lemma proves that $\prec$ is $\I$-EO if its Gr\"obner cone 
intersects the \emph{relative interior} of a special face $\Omega_u$ of the polyhedron $\Omega$, where
$$\Omega_u:=\{\omega\in \R_{\ge 0}^{n+m}\;:\; \omega_1=0,\ldots,\omega_n=0 \}.$$

Since each $C_\prec(\I)$ containing some vector $(\zero,\widetilde\omega_u)$ with $\widetilde\omega_u >0$ comes from some $\I$-EO $\prec$, and since these Gr\"obner cones are closed, Lemma \ref{lem:I_EO_on_boundary} implies

\begin{cor}\label{cor:I_EO_on_boundary}
All vectors $\omega\in\F$ are $\I$-EVs.  
\end{cor}

\section{Main result}
Our main result is the following:
\begin{thm}\label{thm:main_thm}
Let $\I$ be a polynomial ideal in $\K[x_1,\ldots,x_n, u_1,\ldots, u_m]$.

The Gr\"obner cones of all $\I$-specific elimination orders for the elimination of $u_1,\ldots,u_m$ 
from $\I$ 
form a star-shaped region, whose center is the following face of $\R_{\ge 0}^{n+m}$:
$$\F:=\{\omega\in \R_{\ge 0}^{n+m}\;:\; \omega_1=0,\ldots,\omega_n=0 \}$$
\end{thm}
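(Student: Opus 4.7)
The plan is to split the statement into two parts: (A) that $\F$ itself lies inside the region of $\I$-EVs, and (B) that for every $c \in \F$ and every $\I$-EV $\omega^*$, the whole segment $\overline{c\,\omega^*}$ consists of $\I$-EVs.

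Part (A) is essentially the observation made right after Lemma~\ref{lem:I_EO_on_boundary}: points $c=(0,\widetilde c)\in\F$ with $\widetilde c>0$ (the relative interior of $\F$) lie in $\I$-EO cones by that lemma, and the relative boundary of $\F$ is caught by closedness of Gr\"obner cones together with the finiteness of the Gr\"obner fan, approximating boundary points by interior ones.

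The core is part (B). Fix $c=(0,\widetilde c)\in\F$ and $\omega^*\in C_{\prec^*}(\I)$ for some $\I$-EO $\prec^*$; set $\omega_t:=(1-t)c+t\omega^*$. The natural candidate order is
\[
\prec_t := (\omega_t \mid \prec^*),
\]
since Corollary~\ref{cor:represent} immediately yields $\omega_t \in C_{\prec_t}(\I)$. It therefore suffices to show that $\prec_t$ is itself an $\I$-EO. A density argument, approximating boundary points of $\F$ by interior ones, further reduces everything to the generic case $\widetilde c > 0$ and $t\in(0,1)$.

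Given $g\in\redGB(\I,\prec_t)$ with $\ttlt_{\prec_t}(g)=x^\mu\in\K[X]$, I would unfold the definition of $\prec_t$ for any other monomial $x^\beta u^\gamma$ of $g$ and use $\widetilde c>0$, $t\in(0,1)$ to check case-by-case that monomials with $\gamma\ne 0$ have strictly smaller $\omega^*$-degree than $x^\mu$, while pure-$X$ monomials at most tie in $\omega^*$-degree and then lose under $\prec^*$. This forces $\ttlt_{\omega^*}(g)\in\K[X]$ and in fact $\ttlt_{\prec^*}(g)=x^\mu$. The $\I$-EO property of $\prec^*$ then gives the ideal identity $\ttlt_{\prec^*}(\I)\cap\K[X]=\ttlt_{\prec^*}(\I\cap\K[X])$, so $x^\mu$ is divisible by $\ttlt_{\prec^*}(h)$ for some $h\in\redGB(\I,\prec^*)\cap\K[X]\subseteq\K[X]$; iterated $\prec^*$-reduction of $g$ against such $h$ touches only the $\K[X]$-part of $g$. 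I expect the main obstacle to sit precisely here: coupling the $\prec^*$-descent with the reducedness of $\redGB(\I,\prec_t)$ so as to force any residual $u$-containing monomial of $g$ to vanish. This means bridging two different reduced Gr\"obner bases — the one for $\prec^*$ on which the $\I$-EO hypothesis bites, and the one for $\prec_t$ in which $g$ actually lives — and is where the technical work of the proof will concentrate.
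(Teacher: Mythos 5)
Your reduction of the theorem to showing that $\prec_t=(\omega_t\mid\prec^*)$ is an $\I$-EO is sound, and your degree computation is correct: it is in substance Lemma~\ref{lem:xTermevec} and Corollary~\ref{cor:X_term_order_unchanged} of the paper, and it does yield $\ttlt_{\prec^*}(g)=\ttlt_{\prec_t}(g)=x^\mu$ whenever $g\in\redGB(\I,\prec_t)$ has $\ttlt_{\prec_t}(g)\in\K[X]$. (The restriction to $\widetilde c>0$ is not even needed for this step: if the $\omega_t$-degrees tie, the tie-breaker of $\prec_t$ is $\prec^*$ itself, and if they do not, then for $t>0$ the $\omega^*$-degree of the competing monomial is strictly smaller and refinement, Corollary~\ref{cor:represent}, applies.) Where your route genuinely diverges from the paper is afterwards: the paper never reasons about a single element of the target reduced basis in isolation. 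It generates $\redGB(\I,\prec_\omega)$ from $\redGB(\I,\prec_\sigma)$ by Buchberger's algorithm plus interreduction and shows (Lemmas~\ref{lem:spolies_not_in_LX} and~\ref{lem:interreduce}) that the set $\lead[\prec_\omega](\cdot)$ of basis polynomials with leading term in $\K[X]$ is literally unchanged throughout, and then handles non-generic segments by perturbation. Your pointwise strategy would, if completed, be cleaner (no genericity or perturbation of the segment is needed), but it is not completed.

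The genuine gap is exactly the step you flag as ``the main obstacle'': deducing $g\in\K[X]$ from $g\in\redGB(\I,\prec_t)$ and $\ttlt_{\prec^*}(g)=x^\mu\in\K[X]$. The mechanism you sketch --- that iterated $\prec^*$-reduction of $g$ against $h\in\redGB(\I,\prec^*)\cap\K[X]$ touches only the $\K[X]$-part of $g$ --- fails as stated: only the first reduction step is guaranteed to subtract a polynomial in $\K[X]$; as soon as the leading monomial of the partial remainder involves some $u_j$, the division algorithm reduces against elements of $\redGB(\I,\prec^*)$ outside $\K[X]$ and the two parts mix. Note also that the desired implication cannot follow from ``$g\in\I$ and $\ttlt_{\prec^*}(g)\in\K[X]$'' alone: adding to some $h\in\redGB(\I,\prec^*)\cap\K[X]$ a $\prec^*$-small multiple of a basis element not in $\K[X]$ gives an element of $\I$ with leading term in $\K[X]$ that is not in $\K[X]$. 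So the reducedness of $\redGB(\I,\prec_t)$ must enter essentially, and you do not say how. The gap is fillable along your lines: since $h\in\I\cap\K[X]$ and $\prec_t$, $\prec^*$ agree on monomials of $\K[X]$, the Gr\"obner property and reducedness of $\redGB(\I,\prec_t)$ force $\ttlt_{\prec^*}(h)=x^\mu=\ttlt_{\prec_t}(h)$; then a nonzero $g-h\in\I$ would have a $\prec_t$-leading monomial that is either a non-leading monomial of $g$ (contradicting reducedness of $\redGB(\I,\prec_t)$) or a non-leading monomial of $h$ (contradicting, after rerunning your first step on the relevant divisor, reducedness of $\redGB(\I,\prec^*)$), whence $g=h\in\K[X]$. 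As submitted, however, the proposal establishes only the easier half of the argument and leaves the crux open.
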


\begin{proof}
Let $\tau\in\F$, and let $\sigma$ be some $\I$-EV, i.e., one has $\sigma\in C_{\prec'}(\I)$ for some $\I$-EO $\prec'$. 
Here $\tau$ can be part of the relative boundary of $\F$, e.g. $\tau=\zero$ is possible. 
By Corollary \ref{cor:I_EO_on_boundary}, we know that $\tau$ is $\I$-EV and thus we have to show that all other points in the segment $[\sigma,\tau]$ are $\I$-EV, too.
So let $\omega:=\lambda\sigma+(1-\lambda)\tau$ with $\lambda\in(0,1)$.
\smallskip

Let $\prec:=(\sigma|\prec')$, then due to $\sigma\in C_{\prec'}(\I)$ one has $\redGB_\prec(\I)=\redGB_{\prec'}(\I)$ -- see Lemma \ref{lem:represent}.
Moreover the orders $\prec$ and $\prec'$ yield the same leading terms on all $g\in\redGB_\prec(\I)$ (Lemma \ref{lem:represent}) and so $\prec$ is $\I$-EO by Definition \ref{def:I_EO}.

\smallskip

We examine the monomial orders $\prec_\sigma:=(\sigma|\prec)$, $\prec_{\tau}:=(\tau|\prec)$, and $\prec_{\omega}:=(\omega|\prec)$ and show that $\prec_{\omega}$ is $\I$-EO, 
which together with $\omega\in C_{\prec_{\omega}}(\I)$ (see Lemma \ref{lem:represent}) shows that $\omega$ is $\I$-EV.
Note that one has $\prec_\sigma=\prec$ and thus $\prec_\sigma$ is $\I$-EO.
\medskip

Now we show that $\prec_{\omega}$ is $\I$-EO. 
Due to $\tau\in\F$ one has $\tau=(\tau_x,\tau_u)$ with $\tau_x=\zero$, implying that for $\omega=(\omega_x,\omega_u)$ one has 
$\omega_x=\lambda \sigma_x$ with $\lambda>0$. So $\prec_{\omega}=(\omega|\prec)$ and $\prec_\sigma=(\sigma|\prec)$ coincide on $\K[X]$.
This implies 
$$
\ttlt_\prec(\I\cap\K[X])=\ttlt_{\prec_\sigma}(\I\cap\K[X])=\ttlt_{\prec_{\omega}}(\I\cap\K[X]),
$$
we call this set $\LX$.
\smallskip

Assume now, that $\prec_\omega$ is \emph{not} $\I$-EO, i.e., 
$\ttlt_{\prec_\omega}(\I)\cap\K[X]\ne \LX$.
This implies $\ttlt_{\prec_\omega}(\I)\cap\K[X]\not\subseteq \LX$ since the reverse inclusion is always true.
So there must be some $g\in\I$ with $\ttlt_{\prec_\omega}(g)\in\K[X]\setminus \LX$.

Let $x^\alpha:=\ttlt_{\prec_\omega}(g)$ and $m:=\ttlt_{\prec_\sigma}(g)$, then $m\ne x^\alpha$. This holds, since $x^\alpha=m$ leads to the contradiction 
$x^\alpha=m\in \ttlt_{\prec_\sigma}(\I)\cap\K[X]=\LX$ (the latter holds since $\prec_\sigma$ is $\I$-EO).

We conclude $x^\alpha \prec_\tau m$, 
since $\tndeg_\tau(x^\alpha)=0\le\tndeg_\tau(m)$ holds by choice of $\tau$ where in case of ``$=$'' the tie-braker $\prec=\prec_\sigma$ yields $x^\alpha \prec_\sigma m$.
So in total we obtain
 \vspace{4pt}

\begin{tabular}{lll}
 1. &$x^\alpha{\prec_\sigma} m$ &(since $\ttlt_{\prec_\sigma}(g)=m$)\\
 2. &$m \prec_\omega x^\alpha$ &(since $\ttlt_{\prec_\omega}(g)=x^\alpha$)\\
 3. &$x^\alpha\prec_\tau m$ &(since $\tndeg_\tau(x^\alpha)=0$ and $x^\alpha \prec m$).
\end{tabular}
\vspace{4pt}

By the constructions of these monomial orders we conclude 
$$ \left.\begin{array}{lcl}
\tndeg_\sigma(x^\alpha)&\le& \tndeg_\sigma(m)\\
\tndeg_\tau(x^\alpha)&\le& \tndeg_\tau(m)\\
       \end{array}\right\} \quad\Rightarrow \quad
\tndeg_\omega(x^\alpha)\le \tndeg_\omega(m),
  $$
where ``$=$'' in the last inequality implies ``='' in all three inequalties, leading to $x^\alpha\prec m$ (due to $x^\alpha\prec_\sigma m$).
This yields the contradiction $x^\alpha\prec_\omega m$, showing that $g$ cannot exist and thus $\prec_\omega$ is $\I$-EO.
\end{proof}


\section{The geometry of ideal-specific elimination vectors}

In this section we prove two further geometrical properties of the set of all ideal-specific elimination vectors for a given ideal.

Theorem \ref{thm:main_thm} shows that for a given ideal $\I$, the $\I$-specific elimination vectors form a set that is star-shaped. Here we prove that this set in general is non-convex. 
Finally, we prove by example that an ideal $\I$ can have an $\I$-specific elimination order $\prec$, 
whose Gr\"obner cone $C_\prec(\I)$ intersects the exterior of the Gr\"obner fan of $\I$ in the origin only.
\smallskip

\subsection{Cones in the interior}

\begin{lem}\label{lem:cones_in_interior}
There are ideals $\I\subset\K[X][U]$ 
which have an $\I$-EO $\prec$ whose Gr\"obner cone $C_\prec(\I)$ intersects the boundary of the Gr\"obner fan in the origin $0$ only.
 \end{lem}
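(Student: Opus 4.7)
The lemma is an existence statement, so the plan is to exhibit an explicit example: a specific ideal $\I\subset\K[X][U]$ together with an $\I$-EO $\prec_0$ whose Gr\"obner cone has the claimed interior property. I would start by choosing a small polynomial ring and searching among ideals whose Gr\"obner fans are combinatorially rich, for instance zero-dimensional ideals, whose Gr\"obner fans tend to have many maximal cones with bounded, "skewed" bounding hyperplanes.

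Given a candidate $\I$ and a candidate weight vector $\omega_0\in\R^{n+m}_{>0}$, I would pick a tie-breaker order $\prec$ and set $\prec_0:=(\omega_0\mid\prec)$. The verification then splits into two parts. First, compute $G=\redGB(\I,\prec_0)$; by Corollary \ref{cor:represent}, the Gr\"obner cone $C_{\prec_0}(\I)$ is precisely the closed polyhedral cone cut out by the inequalities $\omega^{\top}(\alpha-\beta)\geq 0$, ranging over all pairs in which $y^\alpha=\ttlm_{\prec_0}(g)$ and $y^\beta$ is any other monomial occurring in some $g\in G$. To conclude $C_{\prec_0}(\I)\cap\partial\Omega=\{0\}$, it then suffices to check that for every nonzero $\omega\in\partial\Omega$, i.e., every $\omega\ne 0$ with at least one coordinate equal to zero, at least one of these defining inequalities is strictly violated. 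Second, verify Definition \ref{def:I_EO} directly: inspect the finite list $G$ and confirm that every $g\in G$ with $\ttlt_{\prec_0}(g)\in\K[X]$ lies itself in $\K[X]$.

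The main obstacle is the construction of $\I$. For $C_{\prec_0}(\I)$ to avoid $\partial\Omega$, at least some bounding hyperplane of the cone must arise from a polynomial $g\in G$ whose support genuinely mixes $X$- and $U$-monomials; otherwise every bounding hyperplane of the cone is spanned by coordinate vectors of $\Omega$, and $C_{\prec_0}(\I)$ necessarily contains an entire coordinate face of $\Omega$. But for such a mixed $g$, on one side of the bounding hyperplane the leading term is $X$-heavy (which violates Definition \ref{def:I_EO} since $g\notin\K[X]$), while on the other side it is $U$-heavy (compatible with the $\I$-EO property). The construction must therefore arrange $C_{\prec_0}(\I)$ to sit on the $U$-heavy side of every such hyperplane simultaneously, while the $X$-only leading monomials demanded by the initial ideal $\langle\ttlt_{\omega_0}(\I)\rangle$ come from elements of $G$ that are already univariate in $X$; equivalently, $\I\cap\K[X]$ must be rich enough to supply those leading monomials on its own via Lemma \ref{lem:red_GB_of_I_EO}. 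Once such an $\I$ is written down, both verifications reduce to a finite number of S-polynomial reductions together with a check on a finite system of strict linear inequalities, which is routine.
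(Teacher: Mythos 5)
Your overall strategy is the right one and is essentially what the paper does: exhibit a concrete ideal, compute its reduced Gr\"obner bases, verify the elimination property of Definition \ref{def:I_EO} by inspection, and check that the relevant cone misses the boundary of $\Omega$ away from the origin. Your description of $C_{\prec_0}(\I)$ as the polyhedron cut out by the inequalities $\omega^{\top}(\alpha-\beta)\ge 0$ coming from the reduced Gr\"obner basis is also correct, and your heuristic that the cone must be bounded by hyperplanes arising from polynomials with mixed $X$/$U$ support is a sound way to think about why such examples exist.

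The genuine gap is that you never actually produce the ideal. For a pure existence statement the explicit example \emph{is} the proof; everything else in your proposal is a search heuristic plus a verification procedure that you assert would be routine \emph{once} an ideal is written down. As it stands, nothing in the argument rules out the possibility that the constraints you identify (every mixed-support bounding hyperplane must be crossed on its $U$-heavy side, while the $X$-only leading monomials of the initial ideal are supplied by elements of $\I\cap\K[X]$) are simultaneously unsatisfiable, so the existence claim remains open. The paper closes this gap with the ideal $\I=\left\langle x^2-1,\,xu^2-x-u\right\rangle\subseteq\K[x][u]$, whose Gr\"obner fan has exactly three maximal cones $C_1,C_2,C_3$; it checks that $(0,1)^{\top}\in C_1$ and $(1,0)^{\top}\in C_3$, so that the middle cone $C_2$ meets $\partial\Omega$ only in the origin, and then reads off from the basis $G_2=\{x^2-1,\;xu-u^2+1,\;u^3-2u-x\}$ (leading terms $x^2$, $xu$, $u^3$) that the corresponding order is an $\I$-EO. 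To complete your proof you would need to supply such an example and carry out the two finite verifications you describe.
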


\begin{proof}\label{eg:interior}\normalfont
Consider the following ideal $\I=\left\langle x^2-1,xu^2-x-u\right\rangle\subseteq \K[x][u]$. 
There are exactly three different reduced Gr\"obner bases of $\I$, which correspond to the three Gr\"obner cones of the Gr\"obner fan:
$$\begin{array}{rcl}
G_1&=& \left\{\textbf{x}^2 - 1, \textbf{u}^2 - xu - 1\right\}\\
G_2&=& \left\{\textbf{x}^2 - 1, \textbf{xu} - u^2 + 1, \textbf{u}^3 -2u-x\right\}\\
G_3&=& \left\{\textbf{x} + 2u - u^3, \textbf{u}^4 - 3u^2 + 1\right\}
\end{array}$$
Here the leading terms are given in bold letters.

\begin{figure}[!h]
\begin{center}
\begin{picture}(200,130)

 \put(0,0){\includegraphics[scale=0.27]{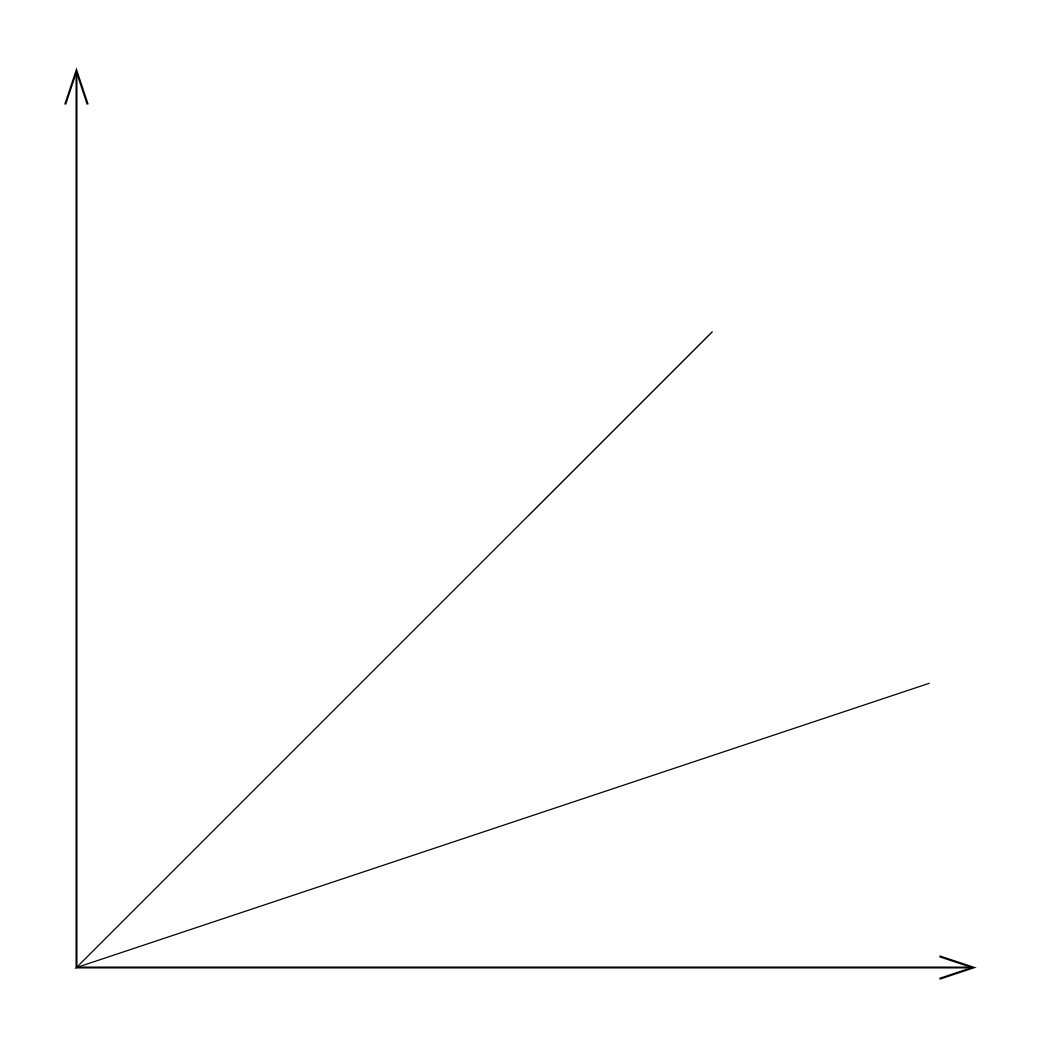}}

\put(118,0){{$\omega_x$}}
\put(-8,123){{$\omega_u$}}
\put(-6,60){{$\F$}}
\put(100,20){{$C_3$}}
\put(80,50){{$C_2$}}
\put(30,90){{$C_1$}}
\end{picture}
 \end{center}
\caption{Gr\"obner fan of $\I=\left\langle x^2-1,xu^2-x-u\right\rangle$}\label{fig:interior}
\end{figure}
For $i=1,2,3$ let $C_i$ be the Gr\"obner cones corresponding to the Gr\"obner basis $G_i$ and let $\prec_i$ be some corresponding monomial order.

Examining the polynomials in $G_1$ and $G_2$ in respect to Definition \ref{def:I_EO}, one observes that $\prec_1$ and $\prec_2$ are $\I$-specific elimination orders for the elimination of $u$.
We now check that the cone $C_2$ must be in between the cones $C_1$ and $C_3$ (see Figure \ref{fig:interior}).

It is easy to check that for $\overline{\omega}:=(1,0)^T$ one has  
$\ttlt_{\overline{\omega}}(G_3)=\ttlt_{\prec_3}(G_3)$ 
and $\ttlt_{\overline{\omega}}(G_i)\ne\ttlt_{\prec_i}(G_i)$ for $i=1,2$.
This implies that $\overline\omega\in C_3$ holds. In the same way one proves $(0,1)^T\in C_1$.

Since the Gr\"obner fan considered here is two-dimensional, $C_2$ must thus be in between $C_1$ and $C_3$.
This proves that for $\I$, there is indeed an $\I$-EO ($\prec_2$) whose Gr\"obner cone $C_2$ intersects the boundary of the Gr\"obner fan in $(0,0)^T$ only.
\end{proof}

\subsection{Non-convexity}

It seems intuitive at first sight that the set of all $\I$-EVs should be convex, but this is in general not true.

\begin{exe}\label{eg:nonconvex}\normalfont
Let $\I:=\left\langle x + u + v, x^2 -1\right\rangle\subseteq \K[x][u, v]$ 
and set $\sigma:=(9, 12, 0)^T$, $\tau:=(9, 0, 10)^T\in\Omega$, and $\omega:=\frac{1}{2}\sigma + \frac{1}{2}\tau=(9,6,5)^T\in\overline{\sigma \tau}$. Let $\prec_{\sigma}, \prec_{\tau}$ and $\prec_{\omega}$ be monomial orders refining $\sigma, \tau$, and  $\omega$ respectively.\\ 

\begin{figure}[!htbp]
\setlength{\unitlength}{0.5pt}
\begin{center}

\begin{picture}(200,245)\hspace*{-48pt}
\put(-30,103){\includegraphics[scale=0.2]{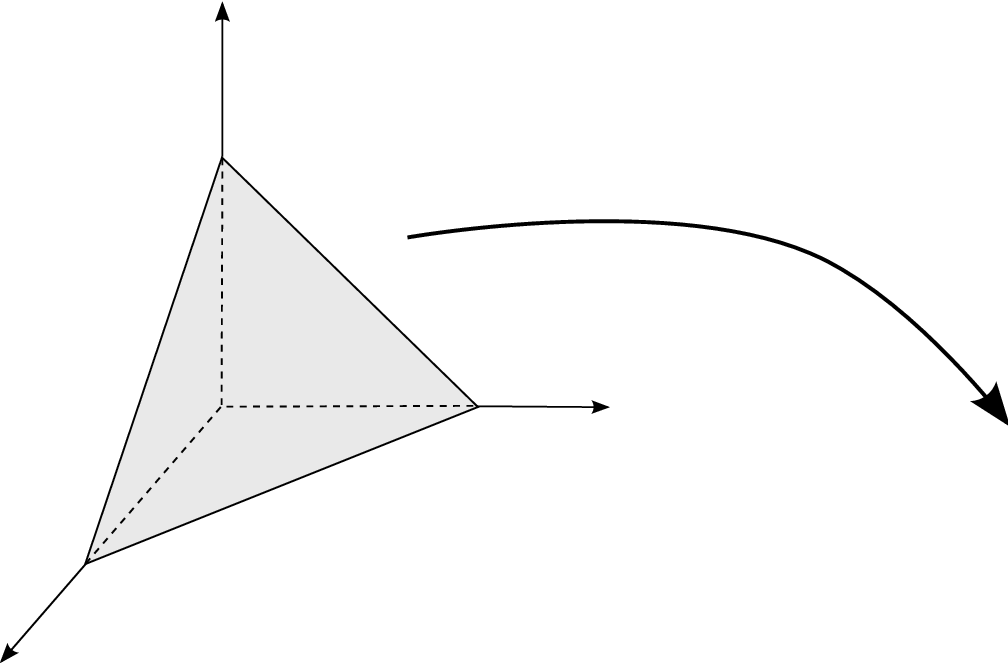}}
\put(-50,113){{$\omega_u$}}
\put(78,158){{$\omega_v$}}
\put(20,218){{$\omega_x$}}

\put(95,0){\includegraphics[scale=0.15]{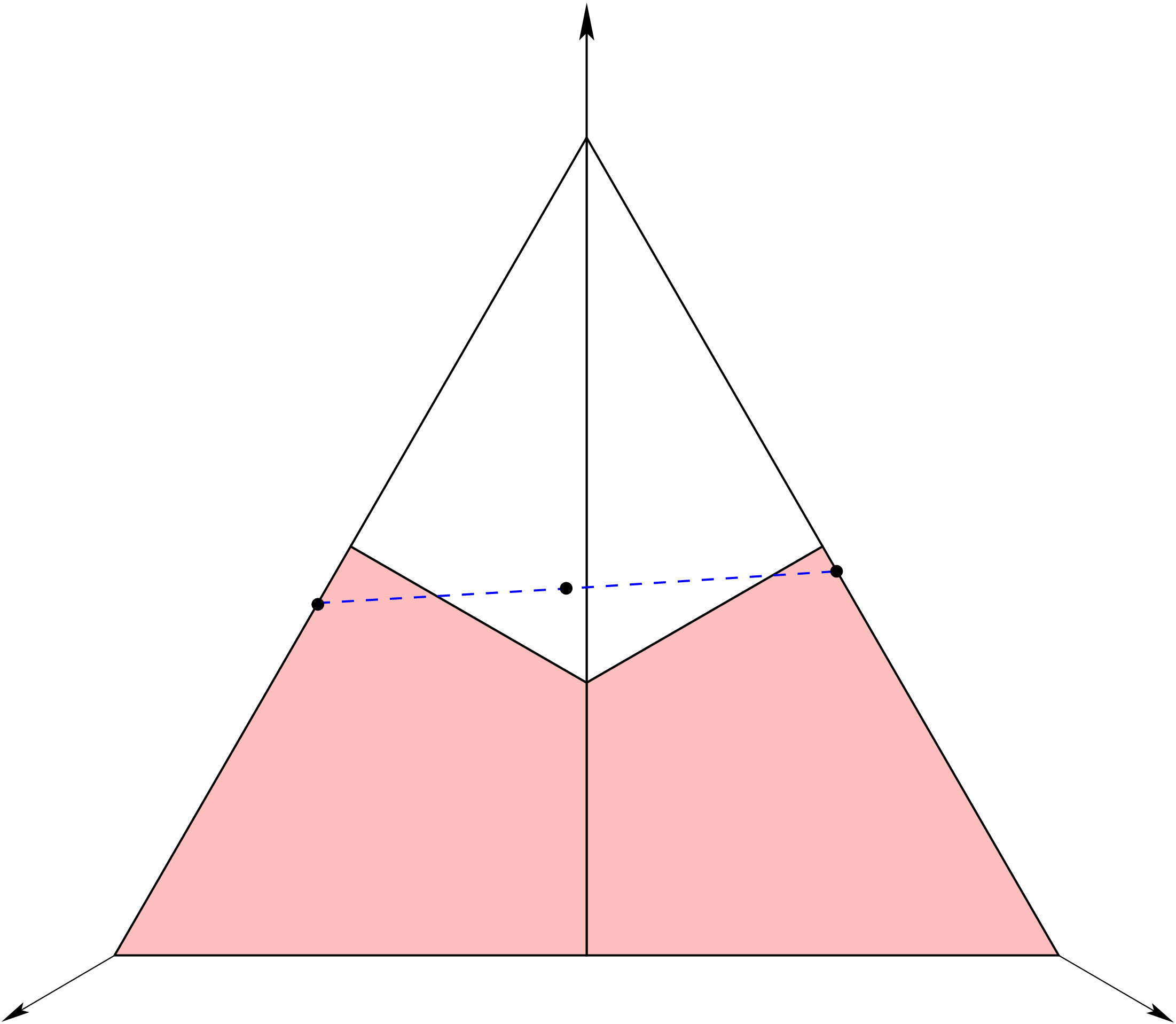}}
\put(260,250){{$\omega_x$}}
\put(75,10){{$\omega_u$}}
\put(405,10){{$\omega_v$}}

\put(168,114){{$\sigma$}}
\put(240,123){{$\omega$}}
\put(325,124){{$\tau$}}
\put(170,50){{$C_{\prec_{\sigma}}$}}
\put(215,153){{$C_{\!\prec_{\omega}}$}}
\put(275,50){{$C_{\prec_{\tau}}$}}
\end{picture}\setlength{\unitlength}{1pt}
\caption{Gr\"obner fan for $\I=\left\langle x+u+v,x^2-1\right\rangle$ }\label{fig:nonconvex}
\end{center}
\end{figure}

Quick calculation shows that the reduced Gr\"obner bases w.r.t. $\prec_{\sigma}$ and $\prec_{\tau}$ are the following
$$\begin{array}{ll}
\redGB(\I,\prec_{\sigma}) & =\left\{\textbf{u} + x+ v, \textbf{x}^2 - 1\right\},\\
\redGB(\I,\prec_{\tau}) &= \left\{\textbf{v} + x + u, \textbf{x}^2 - 1\right\}.
\end{array}$$
So by Definition \ref{def:I_EO}, both $\sigma$ and $\tau$ are $\I$-specific elimination vectors for elimination of the variables $u$ and $v$.
The reduced Gr\"obner basis w.r.t  $\prec_{\omega}$ is
$$\redGB(\I,\prec_{\omega})=\left\{\textbf{x} + u + v, \textbf{u}^2 + 2uv + v^2 - 1\right\}.$$
Since one has $\ttlt_{\prec_{\omega}}(x+u+v)=x\in\K[x]$ but $x+u+v\notin \K[x]$, by Definition \ref{def:I_EO},  $\prec_{\omega}$ can not be $\I$-specific for the elimination of $u$ and $v$.

Figure \ref{fig:nonconvex} depicts the Gr\"obner fan of $\I$ (intersected with some appropriate hyperplane) together with $\sigma, \omega$ and $\tau$. 
The highlighted Gr\"obner cones $C_{\prec_\sigma},C_{\prec_\tau}$ correspond to the $\I$-EOs $\prec_{\sigma}$ and $\prec_{\tau}$.
\end{exe}

\newpage

\section{Improving the elimination algorithm by Tran}\label{algo:elim_algo_tran}

The algorithm of Tran (Algorithm 1 in \cite{article::Tran}) 
calculates a Gr\"obner basis for the elimination ideal of by means of a generic Gr\"obner walk.

\subsection{Generic Gr\"obner walks}

A \emph{generic} Gr\"obner walk ``walks'' along some generic segment $\overline{\sigma\tau}\subset\Omega$. Such a segment is called generic if 
\begin{itemize}
 \item $\overline{\sigma\tau}$ only passes through the \emph{interior} of intermediate Gr\"obner cones or through 
\emph{interior points of their facets} and 
\item $\sigma$ is part of the \emph{interior} of some 
$C_{\prec_{\mathrm{start}}}(\I)$.
\end{itemize}

The walk starts with the (hopefully easy to compute) Gr\"obner Basis $G_0$ of $\mathcal I$ \wrt\ $\prec_0:=\prec_{\mathrm{start}}$.
Then sequentially, starting from $C_{\prec_0}(\I)$ for every cone $C_{\prec_k}(\I)$ through which $\overline{\sigma\tau}$ passes,
the intermediate GB $G_k$ \wrt\ $\prec_{k}$ is calculated. 
This can be done effectively by converting the previously calculated $G_{k-1}$ into $G_k$.
Each such basis-conversion from one intermediate GB into the next is (in general) relatively cheap computationwise, 
keeping the overall amount of necessary calculations relatively low (see \cite{article::AGK3}).

The walk terminates returning $G_\ell$ when reaching a cone $C_{\prec_{\ell}}(\I)$ containing $\tau$.

\subsection{Improvement to Tran's Stopping criterion}

The algorithm of Tran (Algorithm 1 in \cite{article::Tran}) 
which calculates a Gr\"obner basis for the elimination ideal by means of a Gr\"obner walk can be \emph{slightly} improved, by changing the termination criterion:
\smallskip

Tran's algorithm performs a Gr\"obner walk towards some $\tau$ in the relative interior of $\Omega_u$, i.e., a point  $\tau=(\tau_x,\tau_u)$ with $\tau_x=\zero$ and $\tau_u\in\R^m_{>0}$.
As a stopping criterion Tran uses Lemma \ref{lem:I_EO_on_boundary}, which states:
If some intermediate Gr\"obner cone $C_{\prec_k}(\I)$ contains $\tau$,
then the corresponding $\prec_k$ is an ideal-specific elimination order for the elemination of $U$. 
Tran then sets his Gr\"obner walk to terminate when reaching such a cell. 

Note the following: Since $\tau\in\Omega_u$ is part of the boundary of $\Omega$, in such a particular case $C_{\prec_k}(\I)$ intersects the boundary of $\Omega$ in more than just the origin.
In contrast, in Lemma \ref{lem:cones_in_interior} we prove that there are ideals $\I$, for which there are $\I$-specific elimination orders, whose Gr\"obner cones intersect the boundary of the Gr\"obner fan in \emph{just the origin}.
In this regard, our Algorithm \ref{algo:improvedTran} is an improvement of Tran's version.
 \medskip

\fbox{\begin{minipage}{0.98\textwidth}
\begin{algo}\label{algo:improvedTran}(Improved elimination algorithm)\\[2pt]
\sffamily
\begin{tabular}{lllp{11cm}}
\textbf{Input}  & \multicolumn{3}{p{14cm}}{$F=\{f_1,\ldots,f_\ell\}\subseteq \K[x_1,\ldots,x_n][u_1,\ldots,u_m]=\K[X][U]$}\\[4pt]
& \multicolumn{3}{p{14cm}}{$\tau\in\Omega$, where	%
	$\tau^T=(0^T,\tau_u^T)$ with $\tau_u\in\R^m_{>0}$,}\\	%
& \multicolumn{3}{p{14cm}}{$\sigma\in\Omega$, such that $\overline{\sigma\,\tau}$ is generic,}\\	%

& \multicolumn{3}{p{14cm}}{$\prec_{\sigma}, \prec_{\tau}$  refining $\sigma$ resp. $\tau$.}\\[8pt]
\textbf{Output} & \multicolumn{3}{p{14cm}}{$G\subseteq \K[X]$,  reduced Gr\"obner basis %
	of $\left\langle F\right\rangle\cap \K[X]$ }\\
& \multicolumn{3}{p{14cm}}{\wrt\ some $\left\langle F\right\rangle$-specific EO for $U$.}\\[8pt]
\textbf{Init} & \multicolumn{3}{p{14cm}} {%
	Calculate reduced start GB\ \  $G_0:=\redGB(\left\langle F\right\rangle,\prec_{\sigma})$ }\\
& \multicolumn{3}{p{14cm}} { 
	$k:=0$,\hspace{20pt}
	$\I:=\left\langle F\right\rangle$,\hspace{20pt}
	$\omega_0:=\sigma$,\hspace{20pt}
	$\prec_0:=\prec_{\sigma}$}\\[4pt]
\textbf{Step 1} & \multicolumn{3}{p{14cm}} {IF $\prec_k$ is $\I$-EO RETURN $G:=G_k\cap \K[X]$}\\[4pt]
\textbf{Step 2} &\multicolumn{3}{l}{\rule[-4pt]{0.5pt}{8pt}\rule[4pt]{2pt}{0.5pt}\hspace{2pt}GB-walk: change cell}        \\
                &\rule[-7pt]{0.5pt}{18pt}\rule[4pt]{4pt}{0.5pt}\hspace{2pt}(2.1) & \multicolumn{2}{l}{$k:=k+1$}\\
                &\rule[-7pt]{0.5pt}{18pt}\rule[4pt]{4pt}{0.5pt}\hspace{2pt}(2.2) & \multicolumn{2}{p{14,5cm}}{Find next weight vector %
	$\omega_k\in\overline{\sigma\tau}$, }\\
                &\rule[-7pt]{0.5pt}{18pt}       & \multicolumn{2}{p{14,5cm}}{\hspace{10pt}s.t. $\overline{\omega_{k-1}\,\omega_k}=\overline{\omega_{k-1}\,\tau}\cap C_{\prec_{k-1}}(\I)$.}\\ 
                &\rule[-7pt]{0.5pt}{18pt}\rule[4pt]{4pt}{0.5pt}\hspace{2pt}(2.3)& \multicolumn{2}{l}{Set $\prec_{k}:=(\omega_k | \prec_\tau)$.}\\
                &\rule[-7pt]{0.5pt}{18pt}\rule[4pt]{4pt}{0.5pt}\hspace{2pt}(2.3)& \multicolumn{2}{l}{Convert $G_{k-1}$ into Gr\"obner basis $G_k$ \wrt\ $\prec_k$.}\\
                &\rule[4pt]{0.5pt}{11pt}\rule[4pt]{4pt}{0.5pt}\hspace{2pt}(2.4)& \multicolumn{2}{p{14cm}}{Interreduce $G_k$.}\\[4pt]
\textbf{Step 3} & \multicolumn{3}{p{14cm}} {GOTO Step 1} 
\end{tabular}
\end{algo}
\end{minipage}}
\medskip

\begin{rem} Algorithm \ref{algo:improvedTran} as stated above, is in fact Tran's Algorithm in \cite{article::Tran} -- the only difference being the (refined) stopping criterion in Step 1. In Tran's original version of Algorithm 1, his stopping criterion in Step 1 reads: 
\begin{center}
``\ {\sffamily IF $\tau\in C_{\prec_k}(\I)$ RETURN  $G:=G_k\cap \K[X]$.}\ '' 
\end{center}
\end{rem}

\begin{thm}
Algorithm \ref{algo:improvedTran} terminates and is correct.
\end{thm}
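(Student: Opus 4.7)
The plan is to split the argument into termination and correctness, exploiting the fact that Algorithm \ref{algo:improvedTran} differs from Tran's original algorithm only in the stopping criterion of Step 1 --- our criterion is strictly weaker (and thus fires no later) than his.

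For termination, I would invoke the finiteness of the Gr\"obner fan $\mathcal{C}(\I)$: along the generic segment $\overline{\sigma\tau}$ the walk crosses only finitely many Gr\"obner cones, so Step 2 is executed only finitely often --- this is the classical termination argument for the Gr\"obner walk, see \cite{article::Collart97convertingbases}. It remains to show that the loop really exits through Step 1. Since by hypothesis $\tau = (0^T, \gamma^T)$ with $\gamma \in \R^m_{>0}$, Lemma \ref{lem:I_EO_on_boundary} forces $\prec_\tau$ to be an $\I$-EO, so the test in Step 1 is eventually passed, at the latest once the walk has reached $\tau$. In general it will be passed strictly earlier, which is precisely the improvement claimed over Tran.

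For correctness, the plan is to observe that whenever Step 1 returns, two things hold simultaneously: the current order $\prec_k$ is an $\I$-EO (by the stopping test itself), and $G_k$ coincides with $\redGB(\I, \prec_k)$ (by the conversion in Step 2.3 and the interreduction in Step 2.4, whose correctness is the content of \cite{article::Collart97convertingbases}). Lemma \ref{lem:red_GB_of_I_EO} then yields immediately that $G_k \cap \K[X]$ is the reduced Gr\"obner basis of the elimination ideal $\I \cap \K[X]$, which is exactly the declared output.

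The hard part will be to justify that the test ``is $\prec_k$ an $\I$-EO?'' in Step 1 is effectively decidable. I would handle this by direct appeal to Definition \ref{def:I_EO}: since $G_k = \redGB(\I, \prec_k)$ is finite, one simply inspects each $g \in G_k$ and verifies the implication $\ttlt_{\prec_k}(g) \in \K[X] \Rightarrow g \in \K[X]$. All remaining ingredients --- correctness of each individual walk step, genericity-based progress of the walk, and termination of the interreduction --- are standard and can be cited from \cite{article::Collart97convertingbases} and Tran's analysis in \cite{article::Tran}.
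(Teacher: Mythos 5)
Your proposal is correct and follows essentially the same route as the paper: termination via the finitely many cone-crossings of the segment $\overline{\sigma\tau}$, guaranteed exit of the loop because once the walk reaches $\tau\in\F$ the order $(\tau\mid\prec_\tau)=\prec_\tau$ is an $\I$-EO, and correctness via Lemma \ref{lem:red_GB_of_I_EO} applied to the reduced basis $G_k$ at the moment Step 1 fires. Your extra remark on the effective decidability of the stopping test is a sensible addition the paper leaves implicit, but it does not change the argument.
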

\begin{proof}
In each $\omega_k$ the section $\overline{\sigma\, \tau}$ crosses from some Gr\"obner cone into another. 
Since there are only finitely many such transition-points $\omega_k$, the algorithm 
can only perform a finite number of steps.
\smallskip

The algorithm terminates as soon as it passes some $\omega_\ell$ where  $\prec_\ell$ is $\I$-EO. 
Such a point $\omega_\ell$ must exist, due to the following: 

One has $\tau \in C_{\prec_\tau}(\I)$ and thus $\overline{\sigma\tau}\cap C_{\prec_\tau}(\I)\ne\emptyset$.
Let $\omega$ be the first point on $\overline{\sigma\tau}$ that is in $C_{\prec_\tau}$, and assume the algorithm does not terminate on any point in $\overline{\sigma\omega}\setminus\{\omega\}$.
Then $\omega=\omega_k$ holds for some $k$, since
either $\omega=\sigma=\omega_0$ holds or $\omega$ is on the boundary of some $C_{\prec_{k-1}}(\I)$ of the examined $\omega_{k-1}$. 
In either case, the algorithm will calculate $\redGB_{\prec_\omega}(\I)$ for $\prec_\tau=(\omega|\prec_\tau)$ and then terminate, since 
$\omega\in C_{\prec_\tau}$ and thus
$\prec_\tau$ is $\I$-EO by Lemma \ref{lem:I_EO_on_boundary}.
\smallskip

With $\prec_\ell$ being an $\I$-EO, due to Lemma \ref{lem:red_GB_of_I_EO} $G_\ell\cap\K[X]$ is a Gr\"obner basis for $\left<F\right>\cap\K[X]$ (see Lemma \ref{lem:red_GB_of_I_EO}).

\end{proof}
%
%

\begin{exe}
In Lemma \ref{lem:cones_in_interior} we present the ideal $\I=\left\langle x^2-1,xu^2-x-u\right\rangle$
where the difference in stopping criteria actually matters -- see Figure \ref{fig:walk}.
For $\I$ there are three reduced Gr\"obner bases, namely (leading terms in bold letters)
$$\begin{array}{rcl}
\redGB_{\prec_0}(\I)  &=& \left\{\textbf{x}  - u^3 + 2u,\, \textbf{u}^4 - 3u^2 + 1\right\}\\
\redGB_{\prec_1}(\I)     &=& \left\{\textbf{x}^2 - 1,\, \textbf{xu} - u^2 + 1,\, \textbf{u}^3 -2u-x\right\}\\
\redGB_{\prec_2}(\I)&=& \left\{\textbf{x}^2 - 1,\, \textbf{u}^2 - xu - 1\right\}
\end{array}$$
We now start our walk with the Gr\"obner basis $\redGB_{\prec_0}(\I)$
at $\sigma =(7,1)$, in the interior of $C_{\prec_0}$. We then walk towards $\tau=(0,8)\in C_{\prec_2}$. 
With this setting, our algorithm stops after reaching $\omega_1=(6,2)$ with the Gr\"obner basis $\redGB_{\prec_1}(\I)$
while Tran's algorithm stops after reaching $\omega_2=(4,4)$ with the Gr\"obner basis $\redGB_{\prec_2}(\I)$.
So Tran's algorithm calculates one more basis conversion than our Algorithm 1.

\begin{figure}[!h]
\begin{center}
\begin{picture}(200,140)

\put(-65,0){
\put(10,10){\includegraphics[scale=0.35]{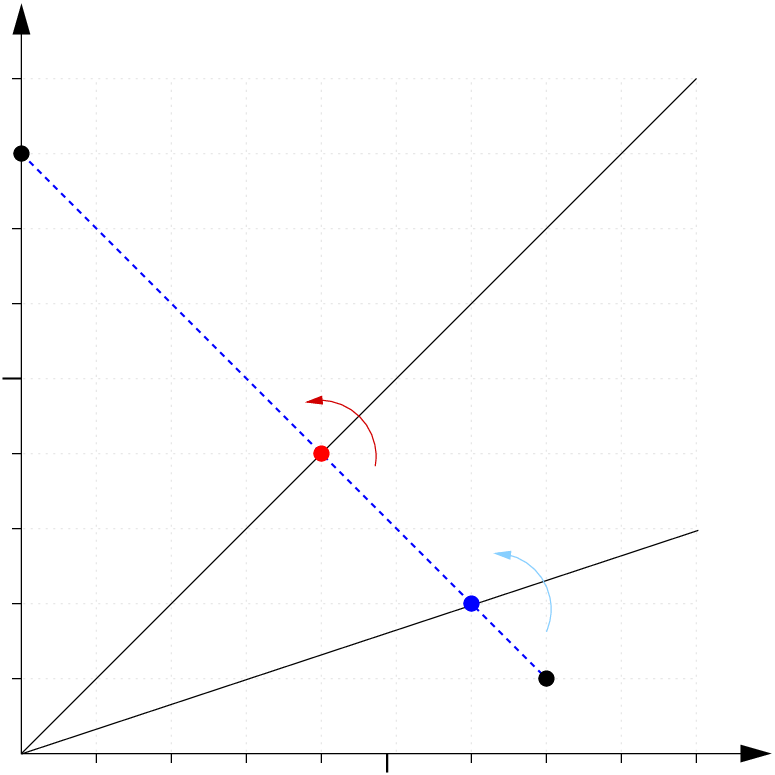}}

\put(118,0){{$\omega_x$}}
\put(-3,128){{$\omega_u$}}
\put(-6,60){{$\F$}}

\put(65,60){\color{red} $\scriptstyle\omega_2$ }
\put(92,34){\color{blue} $\scriptstyle\omega_1$ }

\put(105,25){\color{black} $\scriptstyle\sigma$ }
\put(15,115){\color{black} $\scriptstyle\tau$ }

\put(130,20){{$C_{\prec_0}(\I)$}}
\put(100,70){{$C_{\prec_1}(\I)$}}
\put(50,110){{$C_{\prec_2}(\I)$}}
}

\put(80,90){
\fbox{\begin{minipage}{190pt}
\begin{tabular}{lll}
$\scriptstyle\sigma=$&$\scriptstyle(7,1)$& \small Start of the walk. \\ 
$\scriptstyle\tau=$&$\scriptstyle(0,8)$&  \small  Target of the walk, $\prec_\tau :=\prec_2$.\\ 

$\scriptstyle{\color{blue}\omega_1}=$&$\scriptstyle(6,2)$&  \small Endpoint of our algorithm, \\
                                                        &&\small order is  $\prec_1=(\omega_1|\prec_\tau)$\\ 

$\scriptstyle{\color{red}\omega_2}=$&$\scriptstyle(4,4)$&  \small Endpoint of Tran's algorithm, \\
                                                        &&\small order is  $\prec_2=\prec_\tau=(\omega_2|\prec_\tau)$\\

\end{tabular}
      \end{minipage}}}
\end{picture}
 \end{center}
\caption{Gr\"obner walk for  $\I=\left\langle x^2-1,xu^2-x-u\right\rangle$}\label{fig:walk}
\end{figure}
\end{exe}

\section{Conclusion and Outlook}

The work of Tran in both \cite{article::Tran} and \cite{article::Tran1} provides proper algorithms to make use of Gr\"obner walks in the elemination of variables from polynomial ideals. Tran's approach even simplifies perturbing the corresponding walk in order to obtain a generic walk.
\medskip

Our results refine this work. We provide a geometric interpretation of the set of ideal-specific elimination vectors. More precisely we prove that these weight vectors form a star-shaped region. 
More surprisingly, we show that the corresponding region in general is not convex.
\medskip

Finally, we redefine Tran's stopping criterion and show that this yields some improvement over Tran's original stopping criterion. Tran's criterion stops the walk when reaching a Gr\"obner cone containing the target weight vector $\tau$, which in turn is part of the boundary of the Gr\"obner fan.
In contrast to this, we show that for some polynomial ideals one can terminate the Gr\"obner walk in some ``interior'' Gr\"obner cone, namely a cone whose intersection with the boundary of the Gr\"obner fan is just the origin.
Whether this improvement yields a significant improvement for the average running time of Tran's algorithm 
is not clear, and should be subject to further research.
\bigskip

A possible improvement to our work would be to check wether the star-shapedness of the region of interest gives rise to cleverly changing the direction of the walk, leading to a more efficient zig-zag-walk. 
More precisely one would like to answer the following:

If, in some step of the Gr\"obner walk, the current Gr\"obner cone borders (via a facet) to some cone of an ideal-specific elemination order, one could of course terminate the walk with a single step.
Is it possible to cheaply determine such situations from the current Gr\"obner basis? 
\bigskip

\section{Thanks}
We are deeply thankful for our very supportive referees!

\label{the_bibl}
  \bibliographystyle{plain}

\bibliography{bibl}

\end{document}